\documentclass[12pt,a4paper]{amsart}
\usepackage{amsfonts,amsmath,amssymb}
\usepackage{hyperref}
\usepackage[all]{xy}

\usepackage{amssymb,amsthm,amsxtra}
\usepackage[usenames,dvipsnames]{xcolor}

\usepackage{amscd}
\usepackage{amsthm}
\usepackage{amsfonts}
\usepackage{amssymb}
\usepackage{mathrsfs}
\newtheorem*{theorem*}{Theorem}
\newtheorem{lemma}{Lemma}[section]

\newtheorem*{proposition*}{Proposition}

\newtheorem{theorem}[lemma]{Theorem}

\newtheorem*{conjecture*}{Conjecture}
\newtheorem*{lemma*}{Lemma}

\newtheorem{thm}[lemma]{Theorem}

\newtheorem{lem}[lemma]{Lemma}
\newtheorem{cor}[lemma]{Corollary}

\newcommand{\alfa}{i}

\theoremstyle{remark}

\newtheorem*{remark*}{Remark}
\newtheorem*{remarks*}{Remarks}
\newtheorem{rem}[lemma]{Remark}
\newtheorem{example}[lemma]{Example}

\newtheorem*{examples*}{Examples}
\newtheorem*{example*}{Example}
\newtheorem{question}[lemma]{Question}

\theoremstyle{definition}

\numberwithin{equation}{section}

\oddsidemargin=1cm
\evensidemargin=1cm
\baselineskip 18pt \textwidth 16cm \sloppy \theoremstyle{plain}

\newcommand{\Crit}{\operatorname{Crit}}

\newcommand{\Inc}{\operatorname{Inc}}

\newcommand{\A}{\mathbb{A}}

\newcommand{\Card}{\operatorname{Card}}

\newcommand{\R}{{\mathbb R}}
\newcommand{\bR}{{\mathbb R}}
\newcommand{\C}{{\mathbb C}}
\newcommand{\G}{{\mathbb G}}

\newcommand{\CW}{{\mathcal W}}

\newcommand{\cU}{\mathcal{U}}

\newcommand{\Rami}[1]{{{#1}}}

\newcommand{\RamiE}[1]{{{#1}}}

\newcommand{\RamiI}[1]{{{#1}}}

\newcommand{\RamiQ}[1]{{}}
\newcommand{\RamiQB}[1]{{}}

\newcommand{\oldFT}[1]{}

\newcommand{\oldWFHol}[1]{}

\usepackage{varioref}

\begin{document}

\email{drinfeld@math.uchicago.edu}

\thanks{Partially supported by NSF grant DMS-1063470.}

\dedicatory{To G\'erard Laumon on his 60th birthday}

\author{Vladimir Drinfeld}
\address{Vladimir Drinfeld, Department of Mathematics, University of Chicago.}

\title{Fourier transform of algebraic measures}

\maketitle
\begin{abstract}
These are notes of a talk based on the work \cite{AD} joint with A.~Aizenbud. 

Let $V$ be a finite-dimensional vector space over a local field $F$ of characteristic 0. 
Let $f$ be a function on $V$ of the form $x\mapsto \psi (P(x))$, where $P$ is a polynomial on $V$  and $\psi$ is a nontrivial additive character of $F$. Then it is clear that the Fourier transform Four$( f)$ is well-defined as a distribution on $V^*$. Due to J.Bernstein, Hrushovski-Kazhdan, and Cluckers-Loeser, it is known
that Four$( f)$ is smooth on a non-empty Zariski-open conic subset of $V^*$. The goal of these notes is to sketch a proof of this result (and some related ones), which is very simple modulo resolution of singularities (the existing proofs use D-module theory in the Archimedean case and model theory in the non-Archimedean one). 
\end{abstract}
\setcounter{tocdepth}{3}

These are notes of a talk based on the work \cite{AD} joint  with A.~Aizenbud.
The results from \cite{AD} are formulated in \S\S\ref{s:1}-\ref{s:3},
the proofs are sketched in   \S\S\ref{s:4}-\ref{s:sketch}. 

In Appendix \ref{s:non-Lagrangian} we discuss some ``baby examples"; this material is not contained 
in~\cite{AD}.

\bigskip

I thank D.~Kazhdan for drawing my attention to the questions considered in these notes.
I also thank M.~Kashiwara for communicating to me Example~\ref{ex:Kashiwara} from Appendix~\ref{s:non-Lagrangian}.

\section{A theorem on Fourier transform}  \label{s:1}

Let $F$ be a local field of characteristic 0 (Archimedean or not). 
Let $\psi :F\to\C^{\times}$ be a nontrivial additive character.

Let $V$ be a finite-dimensional vector space over $F$ and $P:V\to F$ a polynomial.
Then $\psi (P(x))$ is a smooth\footnote{In the non-Archimedean case ``smooth" means ``locally constant", in the non-Archimedean case the word ``smooth" is understood literally.} $\C$-valued function on $V$.

Consider $\widehat{\psi (P(x))}$, i.e., the Fourier transform of the function $\psi (P(x))$.

Note that in the naive sense the Fourier transform is not defined because $\psi (P(x))$ doesn't decay as 
$x\to\infty$, rather it oscillates. However,
\emph{$\widehat{\psi (P(x))}$ is well-defined as a distribution\footnote{Our conventions are as follows:
a distribution on a manifold $M$ is a generalized measure (i.e., a linear functional on the space of smooth 
\emph{functions} with compact support), while a generalized function on $M$  is a linear functional on the space of smooth \emph{measures} with compact support. If $M$ is a vector space then sometimes (but not here) we do not distinguish functions from measures.
} on $V^*$.}

In the non-Archimedean case this is clear because the Fourier transform in the sense of distributions is well-defined for \emph{any} generalized function, in particular, for any smooth function.

In the Archimedean case the Fourier transform is well-defined for generalized functions of \emph{moderate growth},
and of course,  $\psi (P(x))$ has moderate growth. 

\begin{theorem}   \label{t:1}
There exists a Zariski-open $U\subset V^*$, $U\ne\emptyset$, such that the distribution $\widehat{\psi (P(x))}$ is smooth on~$U$.
\end{theorem}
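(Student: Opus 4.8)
The plan is to evaluate the oscillatory integral $\widehat{\psi(P(x))}(\xi)=\int_V\psi\bigl(P(x)-\langle\xi,x\rangle\bigr)\,dx$ by resolving singularities at infinity, which reduces it to a finite sum of elementary model integrals from which smoothness in $\xi$ can be read off. Write $\Phi_\xi(x)=P(x)-\langle\xi,x\rangle$. Smoothness of a distribution is local, and a dense Zariski-open subset of $V^*$ is the complement of finitely many hypersurfaces, so it suffices to exhibit a proper closed subvariety $S\subset V^*$ off which $\widehat f$ is smooth. Fix $\chi_0\in C_c^\infty(V)$ and put $\chi_\infty=1-\chi_0$. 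The ``bulk'' part $\xi\mapsto\int_V\chi_0(x)\,\psi(\Phi_\xi(x))\,dx$ is smooth on all of $V^*$: the integrand is $C^\infty$ and compactly supported, so one simply differentiates under the integral sign --- there is no asymptotic parameter, hence no stationary-phase issue, and the critical points of $\Phi_\xi$ inside $V$ play no role. Thus everything is concentrated in the ``contribution from infinity'' $\xi\mapsto\int_V\chi_\infty(x)\,\psi(\Phi_\xi(x))\,dx$, a well-defined distribution (a difference of two such).

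To study it, compactify the fibres: let $\bar V=\PP(V\oplus F)\cong\PP^n$ with hyperplane at infinity $H$. On $\bar V$ the function $\Phi_\xi$ is rational with polar divisor on $H$, and Haar measure $dx$ is a meromorphic volume form with poles on $H$. Applying embedded resolution of singularities over $\bar V\times V^*$ --- resolving the indeterminacy locus of $\Phi$, viewed as a rational map to $\PP^1$, together with the total transform of $H\times V^*$ --- one obtains a proper map $\pi\colon X\to\bar V\times V^*$, an isomorphism over $V\times V^*$, with $E:=\pi^{-1}(H\times V^*)$ a normal-crossings divisor, and such that in suitable local coordinates $(u_1,\dots,u_n)$ on $X$ (with $E=\{u_1\cdots u_k=0\}$) both $\pi^*(dx)=u^{\mathbf b}g(u)\,du$ and $\pi^*\Phi=u^{\mathbf m}h(u,\xi)$ are monomials times units $g,h$, with $\mathbf b,\mathbf m\in\Z^n$. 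By generic smoothness in characteristic $0$, one may arrange the morphism $X\to V^*$ and the divisor $E$ to be equisingular --- the exponents $\mathbf b,\mathbf m$ and the zero loci of the units $h$ locally constant in $\xi$ --- over a dense Zariski-open $U_1\subseteq V^*$.

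Fix $\xi_0\in U_1$. Pulling back along $\pi_{\xi_0}\colon X_{\xi_0}\to\bar V$ and using a finite partition of unity subordinate to the charts above, $\int_V\chi_\infty\,\psi(\Phi_{\xi_0})\,dx$ becomes a finite sum of integrals $\int\psi\bigl(u^{\mathbf m}h(u,\xi)\bigr)\,u^{\mathbf b}g(u)\,\rho(u)\,du$ with $\rho$ a bump function. If $\mathbf m$ has no negative entry then $\pi^*\Phi$ is regular on that chart; provided the measure is regular there too the integrand is smooth and compactly supported, and one differentiates under the integral sign. Otherwise, along a coordinate where $\pi^*\Phi$ has a pole, say $u_{i_0}$ with $m_{i_0}<0$, the derivative $\partial_{u_{i_0}}(\pi^*\Phi)$ is a nowhere-vanishing monomial times a unit (using that $h$ is a unit there), so $\pi^*\Phi$ has no critical point and repeated integration by parts in $u_{i_0}$ --- the classical non-stationary-phase device, each step trading a power $u_{i_0}^{|m_{i_0}|}$ of oscillation for decay of the amplitude --- turns the integral, and after finitely many further steps each of its $\xi$-derivatives (differentiating in $\xi$ produces only additional, controllable, negative powers of the $u_i$), into an absolutely convergent integral of a continuous function; hence it is $C^\infty$ in $\xi$ near $\xi_0$. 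In the non-Archimedean case this step is even simpler: $\int_B\psi(\pi^*\Phi_\xi)=0$ over any sufficiently small ball $B$ along which $\pi^*\Phi_\xi$ blows up, since $\psi$ is then nontrivial on the relevant additive subgroup, and summing over shells leaves a finite, manifestly locally constant expression. Taking $S$ to be $V^*\setminus U_1$ together with the finitely many proper closed loci where some relevant unit $h$ acquires a zero along $E$, we conclude that $\widehat f$ is smooth off $S$. (If $\deg P\le1$ the theorem is trivial: $\widehat f$ is then a translate of a multiple of $\widehat 1$, smooth off a single point.)

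The main obstacle is the behaviour near the boundary divisor $E$: one must show not merely that the contribution from infinity is finite but that it depends smoothly on the parameter $\xi$, and this rests on monomializing the phase by resolution and on the elementary --- but genuinely necessary --- estimates for oscillatory integrals with monomial phase. The delicate point, which forces one to resolve the indeterminacy locus of $\Phi$ rather than just $H$, is ensuring that along every boundary component where the measure has a pole the phase also has a pole, with non-vanishing leading coefficient: this can fail in a naive compactification --- the pole of $\Phi_\xi$ at infinity can degenerate --- and one must blow up further and restrict $\xi$ to a proper open set (informally, $\Phi_\xi$ must be ``tame at infinity''). This resolution-of-singularities argument is what replaces, uniformly in the Archimedean and non-Archimedean cases, the usual appeal to the preservation of holonomicity under Fourier transform.
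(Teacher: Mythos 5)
Your strategy---compactify $V$ to $\bar V=\PP(V\oplus F)$, resolve the indeterminacy of $\Phi_\xi=P-\langle\xi,\cdot\rangle$ at infinity, and show chart by chart that the oscillatory integral converges and is smooth via non-stationary phase---is genuinely different from what the paper does. The paper instead compactifies the \emph{graph map} $\varphi(v)=(v,P(v))\colon V\to V\oplus F=W$ to $\bar\varphi\colon\bar X\to\bar W$, writes the partial Fourier transform as $\psi(\langle f(x),\xi\rangle/p(x))\,|\omega|$, and controls its wave front set using quasi-invariance under a torus action. That approach never needs any integral to converge absolutely, so all the delicate estimates you are trying to carry out by repeated integration by parts disappear.

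The gap in your argument is precisely the ``delicate point'' you flag at the end but do not resolve: you cannot in general arrange, even after resolving the indeterminacy and even for generic $\xi$, that the phase has a pole along every boundary component where $dx$ has a pole. The problem is not an avoidable degeneracy. Take $P(x,y)=xy$ on $V=F^2$, so that $\widehat{\psi(xy)}(\xi)=\psi(-\xi_1\xi_2)$ is smooth everywhere and Theorem~\ref{t:1} is trivially true. After blowing up $[1:0:0]\in\bar V$ one finds $\Phi=(a-\xi_1-\xi_2 ab)/b$ with $dx=b^{-2}\,da\wedge db$; resolving the residual indeterminacy along $\{a=\xi_1,\,b=0\}$ produces, in the chart $a-\xi_1=db$, the functions $\Phi=d-\xi_1\xi_2-\xi_2 db$ and $dx=b^{-1}\,dd\wedge db$. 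On the new exceptional divisor $\{b=0\}$ the phase is regular (bounded, $\mathbf m=0$) while the measure still has a pole of order $1$; your caveat ``provided the measure is regular there too'' fails, there is no oscillation to integrate by parts against, and $\int_{|d|\le C,\,0<|b|\le\delta}\psi(d-\xi_1\xi_2-\xi_2 db)\,|b|^{-1}\,dd\,db$ genuinely diverges for every $\xi$. The total integral $\int\chi_\infty\psi(\Phi_\xi)\,dx$ is a smooth function of $\xi$, but it is only \emph{conditionally} convergent: its value depends on cancellations between different regions at infinity that a chart-by-chart partition of unity destroys. (Moreover, you have not addressed the boundary terms produced by integration by parts; when the pole order of the measure exceeds that of the phase these do not vanish in the naive truncation limit, which is a second symptom of the same issue.) What makes the paper's argument work is that compactifying the graph automatically puts the polar locus of the measure, $\bar X\setminus X$, inside the polar locus $\{p=0\}$ of the phase appearing in the partial Fourier transform---this is exactly the hypothesis ``$\alpha_i=0\Rightarrow\beta_i\ge 0$'' in the Key Lemma of \S\ref{ss:SNC case}---whereas compactifying $V$ alone gives you no such alignment, and the torus-invariance argument replaces the integration by parts entirely. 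To repair your proof you would either have to switch to the graph compactification (at which point you are essentially reproving Theorem~\ref{t:4}), or find a global regularization argument that sees the inter-chart cancellation, which the present sketch does not supply.
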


If $F$ is Archimedean Theorem~\ref{t:1} was proved by J.~Bernstein \cite{Ber_FT} using \emph{D-module theory}. Moreover, he proved that $\widehat{\psi (P(x))}$ satisfies a holonomic system of linear p.d.e.'s with polynomial coefficients. 

If $F$ is non-Archimedean Theorem~\ref{t:1} was proved by Kazhdan-Hrushovski \cite{HK} and Cluckers-Loeser  \cite{CL} using \emph{model theory}.
In both articles Theorem~\ref{t:1} appears as one of many corollaries of a general theory, and this general theory is quite different from D-module theory used by Bernstein.

The goal of these notes is to explain another proof of Theorem~\ref{t:1} and its refinements, namely the one from \cite{AD}. 
It works equally well in the Archimedean and non-Archimedean case. Unlike the older proofs, it uses resolution of singularities.\footnote{In my talk I said that a variant of 
``local uniformization" (see \cite{Za}, \cite{ILO}) would suffice. But the argument that I had in mind contained a gap.}
On the other hand, once you believe in resolution of singularities, the rest is an exercise in elementary analysis (with a  bit of elementary symplectic geometry). 

\medskip

Here are some refinements of Theorem \ref{t:1}.

\medskip

 Refinement A.\, The open subset $U$ can be chosen to be independent of $\psi$.

Refinement B.\,  The open subset $U$ can be chosen to be defined over the field $K$ generated by the coefficients of $P$.

Refinement B$'$.\, The open subset $U$ can be chosen to work for all embeddings of $K$ into all possible local fields. (This makes sense by virtue of A.)

\section{A theorem that implies Theorem~\ref{t:1}.}   \label{s:2}

Let $W$ be a finite-dimensional vector space over $F$.
Let $X$ be a smooth algebraic variety over $F$ and $\varphi :X\to W$ a proper morphism.
Let $\omega$ be a regular top differential form on $X$.
Then we have a measure 
$|\omega|$ on $X(F)$. Set $\mu:=\varphi_*|\omega|$ (note that $\varphi_*$ is well-defined because
$\varphi$ is proper); $\mu$ is a measure on the vector space $W$, in particular, it is a distribution.
Its Fourier transform, $\hat\mu$, is a well-defined\footnote{In the Archimedean case one has to check that $\mu$ has moderate growth. This is not hard and well known.} generalized function on $W^*$; it depends on the choice of $\psi$. The next theorem is
an analog of Theorem~\ref{t:1} and its  Refinements A,B,B$\,'$. 

\begin{thm}   \label{t:2}
There exists a non-empty Zariski-open $U\subset W^*$, independent of $\psi$, such that $\hat\mu$ is smooth on~$U$.
Moreover, if $(X , \varphi ,\omega )$ is defined over a subfield 
$K\subset F$ then one can choose $U$ to be defined over $K$ and to have the required property for all embeddings of $K$ into all possible local fields.
\end{thm}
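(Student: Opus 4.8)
The plan is to reduce the smoothness of $\hat\mu$ on a Zariski-open set to a local statement about oscillatory integrals, using resolution of singularities to put $\varphi$ into a normal-crossings form and then performing stationary-phase / integration-by-parts estimates. First I would apply resolution of singularities to compactify: since $X$ is smooth and $\varphi$ is proper, I may assume (after replacing $X$ by a resolution and shrinking) that $X$ is the complement in a smooth proper variety $\bar X$ of a normal crossings divisor $D$, that $\varphi$ extends to $\bar\varphi:\bar X\to \bar W$ for a suitable compactification, and that both the form $\omega$ (which now has poles along $D$) and the map $\bar\varphi$ are monomial in suitable local coordinates. Concretely, by a $\psi$-independent definition, $\hat\mu(\xi)=\int_{X(F)} \psi(\langle \varphi(x),\xi\rangle)\,|\omega|(x)$, so the question is for which $\xi\in W^*$ this integral, and all its derivatives in $\xi$ (in the Archimedean case) resp. its local constancy (non-Archimedean case), are controlled. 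I would cover $X(F)$ by finitely many charts coming from the resolution; on each chart the phase becomes $\sum_i a_i(x)\,\xi$-pairing against monomials, and the amplitude $|\omega|$ becomes a product $\prod |x_i|^{b_i}\,|x|^{c}\,dx$ for integers $b_i$ and a nonvanishing smooth factor.

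The core of the argument is then: on the locus of $\xi$ where the $1$-form $d_x\langle \varphi(x),\xi\rangle$ is \emph{nowhere vanishing} along the relevant strata (this is where the ``bit of elementary symplectic geometry'' enters — the bad $\xi$ form a proper Zariski-closed cone, namely the projection of the set of critical values of the map $(x,\xi)\mapsto \langle \varphi(x),\xi\rangle$ viewed fiberwise, or more precisely the image of a suitable conormal/Lagrangian variety), one integrates by parts arbitrarily many times against the non-vanishing vector field dual to that $1$-form. In the Archimedean case each integration by parts gains a factor $\|\xi\|^{-1}$ (uniformly on compacta in the good locus), which after finitely many steps shows $\hat\mu$ and all its $\xi$-derivatives are given by absolutely convergent integrals, hence $\hat\mu$ is $C^\infty$ there; the only subtlety is the integrability near $D$, handled by the monomial form of the amplitude and choosing the number of integrations by parts large relative to the $b_i$. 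In the non-Archimedean case the same integration-by-parts (i.e. summing a geometric-series / partition into cosets of a small lattice) shows the integral over each chart is locally constant in $\xi$ on the good locus, again using properness to reduce to finitely many charts.

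For the refinement over a subfield $K\subset F$: resolution of singularities, the normal-crossings model, and the locus of ``bad $\xi$'' are all constructed by algebraic operations on $(X,\varphi,\omega)$, hence can be done over $K$ (resolution of singularities in characteristic $0$ works over any field of characteristic $0$, in particular over $K$). The good open set $U\subset W^*$ is the complement of a $K$-defined Zariski-closed cone, and the integration-by-parts estimates above are valid over any local field into which $K$ embeds, with the same combinatorial bookkeeping; the number of integrations by parts needed depends only on the integers $b_i,c$ from the monomial model, not on the particular embedding. This gives the uniformity over all embeddings of $K$ into all local fields.

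The main obstacle I expect is \emph{identifying the bad locus} precisely and proving it is a proper Zariski-closed subset — i.e., showing that for $\xi$ outside a nowhere-dense algebraic cone the phase $\langle\varphi(x),\xi\rangle$ has, after the normal-crossings resolution, no critical points on (the closures of) the strata of $D$ where the amplitude is concentrated, so that integration by parts genuinely applies. This is where one must combine the generic smoothness of $\varphi$, the structure of the exceptional divisor, and a dimension count for the relative critical locus (the symplectic-geometry input); everything after that is the promised exercise in elementary analysis.
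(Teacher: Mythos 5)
Your proposal takes a genuinely different route from the paper, and it contains a gap that the paper's approach is specifically designed to avoid.

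\emph{The difference in route.} You work directly with the oscillatory integral $\hat\mu(\xi)=\int_{X(F)}\psi(\langle\varphi(x),\xi\rangle)\,|\omega|$ over the non-compact space $X(F)$, covering by charts from the resolution and integrating by parts against the vector field dual to $d_x\langle\varphi(x),\xi\rangle$. The paper instead factors $\varphi$ through the closed embedding $\alfa:X\hookrightarrow\overline X\times W$ followed by the projection $\pi:\overline X\times W\to W$, writes $\hat\mu=\pi_*\widehat{\alfa_*|\omega|}$, and studies the \emph{partial} Fourier transform $\widehat{\alfa_*|\omega|}$ on the compact $\overline X$ locally via the explicit formula $\psi\bigl(\langle f(x),\xi\rangle/p(x)\bigr)\cdot|\omega|$ where $\overline\varphi=(f:p)$. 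It then bounds the \emph{wave front set} of this distribution (using the toric quasi-invariance of $\psi(\eta/x^\alpha)|x^\beta|$) and propagates the bound through $\pi_*$ by H\"ormander's calculus. Theorem~\ref{t:2} is then deduced from the wave-front-set statement (Theorem~\ref{t:3}) and Corollary~\ref{c:conic isotropic}, not proved directly. The paper's route buys you the stronger Theorem~\ref{t:3} (an isotropic upper bound for $\WF(\hat\mu)$), and it replaces delicate boundary integration by parts with a local wave-front computation on a compact space.

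\emph{The gap.} You assume that, after resolution, ``the map $\overline\varphi$ is monomial in suitable local coordinates.'' This is not a consequence of Hironaka's theorem. Resolution gives you that $\overline X\setminus X^\circ$ is a normal crossings divisor; it does \emph{not} give monomialization of the morphism $\overline\varphi$ (monomialization of maps is a much harder and, in this generality, essentially unavailable result). What the NC assumption actually gives you is that the ``denominator'' $p(x)$ in $\overline\varphi=(f:p)$ is a monomial, because its zero divisor is a union of branches of $X_\infty$; the numerator $f$ need not be monomial at all. The paper's key trick is precisely to eliminate the dependence on $f$: since $f$ and $p$ have no common zeros, one may assume $f$ is nonvanishing on the chart, and then the distribution $\psi(\langle f(x),\xi\rangle/p(x))|\omega|$ is the pullback of the simpler $\psi(\eta/p(x))|\omega|$ along the submersion $(x,\xi)\mapsto(x,\langle f(x),\xi\rangle)$. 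Only after this reduction does everything become monomial, and only then does the toric symmetry (and hence the Key Lemma in \S\ref{ss:SNC case}) kick in. Your plan, as written, needs this reduction step but does not contain it; without it, the ``phase becomes a sum of monomials'' claim is unjustified, and the subsequent integration-by-parts bookkeeping near $D$ has nothing to hang on. Your identification of the bad locus as (roughly) the image of the critical loci of $\overline\varphi$ restricted to boundary strata is in the right spirit and matches the paper's $\Crit_{\varphi_r}$ construction, but the mechanism for exploiting it is where the argument diverges.
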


\begin{rem}   \label{r:homoth}
In Theorem~\ref{t:2} independence of $U$ on $\psi$ is equivalent to \emph{stability of $U$ under homotheties} of $W$. (This was not the case in the situation of Theorem~\ref{t:1} because $\psi$ occurred there twice: in the definition of Fourier transform and in the expression $\psi (P(x))$.)
\end{rem}

Let us show that Theorem~\ref{t:2} implies Theorem~\ref{t:1} and its refinements formulated at the end of 
\S\ref{s:1}. To prove this,  apply Theorem~\ref{t:2} as follows.
Set $W:=V\oplus F=V\times F$,  $X:=V$. Define
$\varphi :V\to V\times F$ by $\varphi(v):= (v, P(v))$.
Take $\omega$ to be an invariant differential form on $V=X$. 

Then the generalized function $\hat\mu$
on $W^*=V^*\times F$ is equal to the continuous map $$F\to \{\mbox{generalized functions on } V^*\}$$
that takes $\eta\in F$ to the Fourier transform of $\psi (\eta\cdot P(x))$ with respect to $x\in V$.

So Theorem~\ref{t:2} says that the Fourier transform of $\psi (\eta\cdot P(x))$ with respect to \emph{both} 
$x$ and $\eta$ (which is a priori a \emph{generalized} function on $V^*\times F$)
 is, in fact, \emph{smooth} on some non-empty open subset
 $U\subset V^*\times F$, $U\ne\emptyset$, which can be chosen to be stable with respect to homotheties
 (see Remark~\ref{r:homoth}). It remains to note that such $U$
 has non-empty intersection with the hyperplane $V^*\times \{1\}\subset V^*\times F$.
 
 \medskip
 
Theorem~\ref{t:2} is deduced from Theorem~\ref{t:3}, which is formulated in the next section.

\section{The theorem on wave front sets}   \label{s:3}
\subsection{Isotropic subsets of symplectic varieties}   \label{ss:isotropic_notion}
Let $M$ be a symplectic algebraic mani\-fold\footnote{``Manifold"=``smooth variety".} over a field of characteristic 0. An algebraic subvariety (or more generally, a constructible subset) of $M$ is said to be \emph{isotropic} if each of its smooth subvarieties is. (As usual, a smooth subvariety of $M$ is said to be isotropic if each of its tangent spaces is). It is known that the closure of an isotropic subvariety is isotropic (e.g., see \cite[Proposition 1.3.30]{CG} and  \cite[\S 1.5.16]{CG}). So a subvariety of a symplectic variety is isotropic if and only if its smooth locus is.


Now suppose that $M$ is the cotangent bundle of an  algebraic manifold $Y$. A subset of $M=T^*Y$ is said to be \emph{conic} if it is stable with respect to the action of $\G_m$ on $T^*Y$. The following two statements are easy and standard.

\begin{lem}    \label{l:conic isotropic}
A conic algebraic subvariety $Z\subset T^*Y$ is isotropic if and only if there exists a finite collection of smooth locally closed subvarieties of $Y$ such that $Z$ is contained in the union of their conormal bundles.
\end{lem}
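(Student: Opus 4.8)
The plan is to prove both implications directly, the nontrivial (``only if'') one by Noetherian induction on $\dim Z$. The ``if'' direction is the easy one. For a smooth locally closed subvariety $i\colon Y'\hookrightarrow Y$, the conormal bundle $N^*_{Y'}Y\subset T^*Y$ is a smooth locally closed Lagrangian, in particular isotropic, subvariety: the tautological $1$-form $\alpha$ of $T^*Y$ vanishes on it by construction, hence so does $\omega=d\alpha$, while $\dim N^*_{Y'}Y=\dim Y=\tfrac12\dim T^*Y$. If $Z\subset\bigcup_j N^*_{Y'_j}Y$ with the union finite, then each irreducible component of $Z$ has its generic point in one of the finitely many locally closed sets $N^*_{Y'_j}Y$, hence lies in its closure, which is isotropic by the result cited just before the lemma; so $Z$ is isotropic.

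For the converse I would first reduce to $Z$ irreducible, since the connected group $\G_m$ fixes each irreducible component of the conic variety $Z$, and each component is again conic and isotropic. Write $\pi\colon T^*Y\to Y$ for the projection, $S:=\overline{\pi(Z)}$, and $S^{\mathrm{sm}}\subset Y$ for its smooth locus, a smooth locally closed subvariety of $Y$. The one input that makes the induction work is the observation that \emph{the tautological $1$-form $\alpha$ restricts to zero on the smooth locus of any conic isotropic subvariety $C\subset T^*Y$}: at a smooth point $p\in C$ the tangent space $T_pC$ is $\omega$-isotropic and, because $C$ is conic, contains the Euler vector field $E$, so from $\alpha=\iota_E\omega$ (up to sign) one gets $\alpha(v)=\pm\omega(E,v)=0$ for all $v\in T_pC$.

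Now I would apply generic smoothness (valid in characteristic $0$): there is a dense open $U\subset S^{\mathrm{sm}}$, which may be taken inside the constructible dense set $\pi(Z)$, such that $\pi$ restricted to $Z^{\mathrm{sm}}\cap\pi^{-1}(U)$ is a smooth morphism onto $U$. For a point $p=(u,\xi)$ of this set one then has $d\pi_p(T_pZ)=T_uS$; combined with $\alpha|_{Z^{\mathrm{sm}}}=0$ and the identity $\alpha_{(u,\xi)}(v)=\xi\bigl(d\pi_p(v)\bigr)$ this forces $\xi|_{T_uS}=0$, i.e.\ $p\in N^*_{S^{\mathrm{sm}}}Y$. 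Since $Z^{\mathrm{sm}}\cap\pi^{-1}(U)$ is dense in the irreducible set $Z\cap\pi^{-1}(U)$ and $N^*_{S^{\mathrm{sm}}}Y$ is closed in $\pi^{-1}(S^{\mathrm{sm}})$, I conclude $Z\cap\pi^{-1}(U)\subset N^*_{S^{\mathrm{sm}}}Y$. The complement $Z':=Z\setminus\pi^{-1}(U)$ is a proper closed conic isotropic subvariety of $Z$, hence of strictly smaller dimension, and by the induction hypothesis it is covered by a finite union of conormal bundles; together with the inclusion just established this completes the induction, the base case $\dim Z=0$ being trivial (a $0$-dimensional conic subvariety is $\G_m$-fixed, hence lies in the zero section $N^*_YY$).

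There is no genuinely hard step here; the point requiring care is the bookkeeping. I would check that the Euler-field identity is used only at honest smooth points; that ``generic smoothness'' is invoked in its characteristic-$0$ form and genuinely yields $d\pi_p(T_pZ)=T_uS$ for $u$ in the smooth locus of $S$ (so that $T_uU=T_uS$); and that passing from the dense open piece $Z^{\mathrm{sm}}\cap\pi^{-1}(U)$ to all of $Z\cap\pi^{-1}(U)$ is legitimate, using that $Z$ is reduced and irreducible and that a conormal bundle is closed over the smooth locus of its base. The Noetherian induction then plainly terminates.
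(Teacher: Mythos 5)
Your proof is correct, and since the paper gives no argument at all for Lemma~\ref{l:conic isotropic} (it is labeled ``easy and standard''), the relevant comparison is with the standard proof in the literature (e.g.\ the argument implicit in \cite{CG} or in Ginzburg's treatment of Lagrangian subvarieties of cotangent bundles): you have reproduced exactly that argument. The two inputs you isolate are the right ones: (i) the tautological form $\alpha=\iota_E\omega$ vanishes on the smooth locus of any conic isotropic subvariety, because the Euler field $E$ is tangent to it; and (ii) characteristic-zero generic smoothness applied to $\pi|_{Z^{\mathrm{sm}}}\colon Z^{\mathrm{sm}}\to S=\overline{\pi(Z)}$, which identifies the generic fiber direction with the conormal of $S^{\mathrm{sm}}$ and lets Noetherian/dimension induction peel off the remaining piece $Z\setminus\pi^{-1}(U)$. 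The ``if'' direction is handled correctly by passing to generic points of components and invoking that closures of conormal bundles are isotropic. The bookkeeping points you flagged (shrinking $U$ into the interior of the constructible set $\pi(Z^{\mathrm{sm}})$; $\pi^{-1}(U)$ being $\G_m$-stable so that $Z'$ is again conic; $N^*_{S^{\mathrm{sm}}}Y$ being closed in $\pi^{-1}(S^{\mathrm{sm}})$; the $0$-dimensional base case landing in the zero section) are all handled correctly. No gaps.
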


\begin{cor}    \label{c:conic isotropic}
Let $Z\subset T^*Y$ be an isotropic conic algebraic subvariety. Then there exists a dense open subset 
$U\subset T^*Y$ such that $Z\cap T^*U$ is contained in the zero section.
\end{cor}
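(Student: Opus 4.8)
The plan is to derive the corollary directly from Lemma~\ref{l:conic isotropic} by a short bookkeeping argument with finitely many conormal bundles; I do not expect any serious obstacle, since all of the real content already sits in Lemma~\ref{l:conic isotropic}.

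First I would apply Lemma~\ref{l:conic isotropic} to produce finitely many smooth locally closed subvarieties $Y_1,\dots,Y_n\subset Y$ with $Z\subset\bigcup_{i=1}^n N_i$, where $N_i\subset T^*Y$ is the conormal bundle of $Y_i$. Write $\pi\colon T^*Y\to Y$ for the projection, so that $\pi(N_i)=Y_i$. I would then treat each index $i$ separately. If $\overline{Y_i}\ne Y$, put $U_i:=Y\setminus\overline{Y_i}$; this is a dense open subset of $Y$, and $N_i\cap T^*U_i=\varnothing$ because $N_i$ lies over $Y_i\subset\overline{Y_i}$. If instead $\overline{Y_i}=Y$, then $Y_i$ is locally closed and dense, hence open in $Y$, so its conormal bundle $N_i$ is nothing but the zero section over $Y_i$; in this case put $U_i:=Y_i$, again dense and open in $Y$, and $N_i\cap T^*U_i$ is contained in the zero section.

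Finally I would set $U:=\bigcap_{i=1}^n U_i$, a finite intersection of dense open subsets of $Y$ and hence itself dense and open. Since $U\subset U_i$ for every $i$ we get $T^*U\subset T^*U_i$, and therefore
\[
Z\cap T^*U\ \subset\ \bigcup_{i=1}^n\bigl(N_i\cap T^*U\bigr)\ \subset\ \bigcup_{i=1}^n\bigl(N_i\cap T^*U_i\bigr),
\]
which by the case analysis is contained in the zero section. (Here, and presumably in the statement of the corollary, $U$ is an open subset of $Y$ and $T^*U$ denotes the restriction of $T^*Y$ over it.)

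The points that deserve a word of justification are minor: that a dense locally closed subvariety is automatically open (immediate from the meaning of ``locally closed''), that the conormal bundle of an open subvariety is the zero section, and that the conic hypothesis on $Z$ enters only through the application of Lemma~\ref{l:conic isotropic} (conormal bundles are automatically conic). None of these is an obstacle, so the only genuine work went into Lemma~\ref{l:conic isotropic}.
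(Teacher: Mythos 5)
Your proof is correct, and it is the natural argument; the paper does not supply one, dismissing both Lemma~\ref{l:conic isotropic} and Corollary~\ref{c:conic isotropic} as ``easy and standard.'' Your case analysis (either $Y_i$ is non-dense and you delete its closure, or $Y_i$ is dense hence open and its conormal is the zero section) is exactly what is needed, and your observation that the statement must really mean $U\subset Y$ rather than $U\subset T^*Y$ is a fair reading of an apparent typo in the paper. One small remark for completeness: the claim that $Y\setminus\overline{Y_i}$ is dense when $\overline{Y_i}\neq Y$ uses irreducibility of $Y$; since ``manifold'' in the paper means a smooth variety and is implicitly connected, this is automatic, but if one allowed several components the argument should just be run on each irreducible component separately.
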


\subsection{The theorem on wave fronts}  
Let $F$ be a local field of characteristic 0. Let $Y$ be an analytic manifold\footnote{The words ``analytic manifold" are understood in the most naive sense (not rigid-analytic or Berkovich-analytic).} over $F$. 
(If $F=\R$ one can assume $Y$ to be a $C^{\infty}$ manifold.) Following L.~H\"ormander \cite{Hor} and
D.~Heifetz \cite{Hef}, to each distribution or generalized function $\nu$ on $Y$ one associates a conic closed subset of $T^*Y$ called the \emph{wave front set} of~$\nu$. The precise definition will be recalled in 
\S\ref{ss: wave front def} below.


In the situation of Theorem~\ref{t:2} $\hat\mu$ is  a generalized function on $W^*$, so its wave front is a subset of  $T^*W^*$. Note that $T^*W^*=W^*\times W$ carries an action of $\G_m\times\G_m\,$.

\begin{theorem}   \label{t:3}
In the situation of Theorem~\ref{t:2} the wave front of $\hat\mu$ is contained in $I(F)$, where  $I\subset T^*W^*$ is some isotropic algebraic subvariety stable with respect to $\G_m\times\G_m$ and defined over $K$ (as before, $K$ is a field of definition of our data).
Moreover, one can choose $I$ to have the required property for all embeddings of $K$ into all local fields.
\end{theorem}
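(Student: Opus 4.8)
The plan is to reduce Theorem~\ref{t:3} to a statement about the wave front set of the \emph{pushforward measure} $\mu=\varphi_*|\omega|$ itself, and to control that wave front set by resolving singularities so that the phase function in an oscillatory-integral presentation of $\hat\mu$ becomes, locally on $X(F)$, a monomial in suitable coordinates. Concretely, recall that for a distribution $\nu$ the wave front of its Fourier transform $\hat\nu$ on $W^*$ is obtained from $\WF(\nu)\subset T^*W=W\times W^*$ by the flip $(w,\xi)\mapsto(\xi,-w)$ (in the $p$-adic case one uses the Heifetz analogue, but the flip statement is formally the same); so it suffices to produce an isotropic, $\G_m\times\G_m$-stable, $K$-rational subvariety $J\subset T^*W$ containing $\WF(\mu)$ for all embeddings of $K$. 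Since $\mu=\varphi_*|\omega|$ with $\varphi:X\to W$ proper and $X$ smooth, the first reduction is the standard fact that $\WF(\varphi_*|\omega|)\subset \varphi_*(\WF(|\omega|))$, where $\varphi_*$ of a subset of $T^*X$ means: take points $(x,d\varphi_x^*\xi)$ with $(\varphi(x),\xi)\in T^*W$, i.e. the image under the canonical correspondence $T^*X \leftarrow X\times_W T^*W \to T^*W$. Because $|\omega|$ is (away from the zero divisor of $\omega$) a smooth nonvanishing measure, the naive expectation is $\WF(|\omega|)=$ zero section, which would force $\WF(\mu)$ into the zero section of $T^*W$ — but that is false precisely because $\varphi$ is not smooth, and $|\omega|$ may fail to be smooth where $\omega$ vanishes. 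So the real content is a local analytic computation near the bad locus.

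Next I would set up the local picture. Cover $X(F)$ by charts; in each chart $\varphi$ is an analytic map to $W$ and $\omega=g(x)\,dx_1\wedge\cdots\wedge dx_n$ with $g$ a polynomial (or analytic function). The Fourier transform is then the oscillatory integral $\hat\mu(\xi)=\int_{X(F)} \psi(\langle\varphi(x),\xi\rangle)\,|g(x)|\,dx$, a family over $\xi\in W^*$ of integrals with phase $x\mapsto\langle\varphi(x),\xi\rangle$ and amplitude $|g(x)|$. The wave front set of $\hat\mu$ at a point $(\xi_0,w_0)$ is governed, via the standard stationary-phase/integration-by-parts characterization of $\WF$, by the set of $x$ at which the phase is stationary, i.e. $d_x\langle\varphi(x),\xi_0\rangle=0$, together with the value $w_0=\varphi(x)$ — this is exactly the correspondence $T^*X\leftarrow X\times_W T^*W\to T^*W$ mentioned above, applied to the conormal variety of the ``graph''. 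The key point is that the amplitude $|g(x)|$, though only Hölder-type continuous, does not enlarge the wave front set beyond this critical-point locus: away from the critical set one integrates by parts freely (the amplitude's lack of smoothness is harmless because one differentiates $\psi$, not $|g|$, gaining decay in $\xi$), and this is where resolution of singularities is used, to put $g$ into monomial form $g=(\text{unit})\cdot\prod x_i^{a_i}$ so that $|g|$ is a product of $|x_i|^{a_i}$ and the critical locus of the phase (which lives on the smooth variety, independent of $g$) is the only place decay can fail. Thus $\WF(\hat\mu)$ is contained in the image in $T^*W^*$ (after the flip) of the conormal-type variety
\[
\Lambda \;=\; \overline{\{(\varphi(x),\xi)\in W\times W^* : d_x\langle\varphi(x),\xi\rangle=0\}} .
\]
The variety $\Lambda$ is algebraic and defined over $K$, it is manifestly stable under the $\G_m\times\G_m$ action (scaling $\xi$ scales the condition homogeneously, scaling $\varphi$... ) — more precisely, after applying the flip it becomes the required $I$, and the $\G_m\times\G_m$-stability is exactly Remark~\ref{r:homoth}'s independence on $\psi$ together with the homogeneity in the $W^*$-direction — and, crucially, $\Lambda$ is \emph{isotropic} in $T^*W^*$. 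The isotropy is where Lemma~\ref{l:conic isotropic} and its corollary will feed back into Theorem~\ref{t:2}: an isotropic conic variety has a dense open complement-of-projection, so smoothness of $\hat\mu$ holds on a dense Zariski-open $U$.

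The isotropy of $\Lambda$ is what I expect to be the main obstacle, and it is really a piece of elementary symplectic geometry. The cleanest route: consider the smooth variety $X$ with its map $\varphi:X\to W$, form the ``conormal'' $N\subset T^*X\times_X (X\times W^*) \subset X\times T^*X^{\vee}\times W^*$ cut out by $d\varphi_x^*\xi=0$; this is, tautologically, the fibered product describing $X\times_W 0_{W}$ pulled back — better, observe that $\Lambda$ is the image of $T^*W^*$'s zero section... no: the right statement is that $\Lambda$ is the image under the Lagrangian correspondence $T^*W \xleftarrow{} T^*(X\times W^*)\supset N^* \xrightarrow{} T^*W^*$ of a conormal bundle, hence Lagrangian upstairs, hence its image is isotropic because images of Lagrangians under the moment correspondence attached to a morphism are isotropic (the ``Lagrangian correspondence composed with Lagrangian gives isotropic'' principle; over $\mathbb C$ this is classical, and over a general field of characteristic $0$ one checks it scheme-theoretically on the smooth locus, using that the differential of $\varphi$ has constant rank on a stratification and invoking the closure statement from \cite{CG} quoted in \S\ref{ss:isotropic_notion}). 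One must be careful that everything is $K$-rational and behaves uniformly under change of embedding $K\hookrightarrow F'$: since $\Lambda$ is defined by algebraic equations with coefficients in $K$, and isotropy is a geometric (field-extension-stable) condition checkable over $\overline K$, the ``for all embeddings'' clause is automatic. The remaining routine work is: (i) spelling out the $\WF$ estimate for oscillatory integrals with algebraic amplitude in both the real and $p$-adic settings, quoting Hörmander/Heifetz for the integration-by-parts bound; (ii) the monomialization step via resolution of singularities to handle the zero locus of $\omega$; and (iii) the bookkeeping identifying the flip of $\Lambda$ with the variety $I$ of the statement and checking $\G_m\times\G_m$-stability — none of which presents conceptual difficulty once the isotropy of $\Lambda$ is in hand.
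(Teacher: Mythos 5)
Your proposal rests on two load-bearing claims that are not correct, and the error is of a kind that would produce a wrong theorem if pushed through.

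\textbf{The ``flip'' claim is false.} You assert that $\WF(\hat\nu)$ on $W^*$ is obtained from $\WF(\nu)\subset W\times W^*$ by $(w,\xi)\mapsto(\xi,-w)$. This is not how the H\"ormander/Heifetz wave front set interacts with Fourier transform. The wave front set records only \emph{local} singularities, whereas the Fourier transform exchanges local singularities of $\nu$ with lack of decay of $\hat\nu$ \emph{at infinity}, and vice versa. The simplest counterexample: $\nu=1$ has $\WF(\nu)=\emptyset$, yet $\hat\nu=\delta_0$ has $\WF(\hat\nu)=\{0\}\times(W\setminus 0)$. In the situation of Theorem~\ref{t:2}, if $\omega$ has no zeros and $\varphi$ happens to be a submersion, then $\mu=\varphi_*|\omega|$ is a smooth measure and $\WF(\mu)$ is the zero section; the flip claim would then give $\WF(\hat\mu)\subset$ zero section, i.e.\ $\hat\mu$ smooth everywhere, which is false in general. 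So the reduction ``it suffices to bound $\WF(\mu)$'' does not work.

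\textbf{The singularities of $\hat\mu$ come from infinity, not from critical points of the phase on $X$.} Your fallback is the set $\Lambda=\overline{\{(\varphi(x),\xi):d_x\langle\varphi(x),\xi\rangle=0\}}$, essentially $\Crit_\varphi\subset T^*W$. But $X$ is non-compact (it must be, if $\varphi:X\to W$ is proper and $\dim W>0$), and the obstruction to differentiating $\hat\mu(\xi)=\int_X\psi(\langle\varphi(x),\xi\rangle)\,|\omega|$ under the integral sign is the growth of $\varphi(x)$ as $x\to\infty$ in $X$, not the presence of critical points of the phase in $X$. The paper therefore compactifies $\varphi$ to $\overline\varphi:\overline X\to\overline W$ and studies the behaviour near $X_\infty=\overline\varphi^{-1}(W_\infty)$: locally near $X_\infty$ one writes $\overline\varphi=(f(x):p(x))$ and the (compactified) phase becomes $\langle f(x),\xi\rangle/p(x)$, which blows up where $p$ vanishes. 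That blow-up is the source of the wave front of $\hat\mu$. Accordingly, the variety $I$ of the paper is built from $\Crit_{\varphi_r}$ for the maps $\varphi_r:Z_r\to W_\infty$ on strata of the boundary NC divisor — not from $\Crit_\varphi$ on $X$. Your proposal never looks at $W_\infty$ or $X_\infty$; resolution of singularities in your plan is used only to ``monomialize the density $g$'', whereas in the paper it is needed primarily to make the boundary $\overline X\setminus X^\circ$ an NC divisor.

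\textbf{The missing mechanism: toric quasi-invariance.} Once the local model $\psi(\eta/x^\alpha)\,|x^\beta|$ is reached, the paper's Key Lemma bounds its wave front by a one-line argument: the distribution is quasi-invariant under the torus $(F^\times)^n$ acting by $\tilde x_i=\lambda_i x_i$, $\tilde\eta=\lambda^\alpha\eta$, which forces every point of its wave front over $z$ to annihilate the tangent space to the $T$-orbit through $z$; combined with smoothness off the coordinate hyperplanes this gives the conormal bound. Your appeal to ``the standard stationary-phase/integration-by-parts characterization of $\WF$'' has no analogue of this and, as written, does not establish the needed isotropy (that passage is left unfinished, and the $\G_m\times\G_m$-stability argument trails off).

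In short, the broad flavour (resolution of singularities, oscillatory integrals, conormal/isotropic geometry) matches the paper, but the actual mechanism — partial Fourier transform on $\overline X\times W$, the projective formula $\psi(\langle f,\xi\rangle/p)|\omega|$, and the toric Key Lemma — is absent, and the two claims your reduction rests on (the flip of wave fronts under Fourier transform, and the sufficiency of $\Crit_\varphi$ on $X$ itself) are incorrect.
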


Note that Theorem~\ref{t:3} implies Theorem~\ref{t:2}. This is a consequence of Corollary~\ref{c:conic isotropic} and the following key property of the wave front of a generalized function $\nu$ on a manifold~$Y$:  the restriction of $\nu$ to an open subset $U\subset Y$ is smooth if and only if the intersection of the wave front with $T^*U$ is contained in the zero section.

The next subsection can be skipped by the reader.

\subsection{Definition of wave front set}   \label{ss: wave front def}
The definition that we are using slightly differs from the classical definition of L.~H\"ormander \cite{Hor}.

Let $V$ be a finite-dimensional $F$-vector space and let $\nu$ be a distribution on an open subset 
$U\subset V$. The \emph{wave front} of $\nu$ is a certain closed subset in $T^*U=U\times V^*$.
Namely, a point $(x_0, w_0)\in U\times V^*$ is \emph{not} in the wave front of $\nu$ if
there exists a smooth compactly supported function $\rho$ on $V$ with $\rho(x_0)\neq 0$ such that 
the Fourier transform\footnote{The Fourier transform is a smooth function on $V^*$.}  of $\rho \cdot \nu$ ``vanishes asymptotically" in the direction of $w_0$. 

The precise meaning of these words is as follows: we  say that a function $f$ on $V^*$  \emph{vanishes asymptotically in the direction of} $w_0$
if there exists a smooth compactly supported function $\sigma$ on $V^*$ with $\sigma (w_0) \neq 0$ such that the function $\phi$ on $V^* \times F$ defined by $\phi(w,\lambda):=f(\lambda w) \cdot \sigma (w)$ is a Schwartz function.\footnote{Thus we understand the above word ``direction" as a line. H\"ormander considers $F=\R$ and understands ``direction" as a half-line (i.e., a ray).}

In other words, if $w_0\ne 0$ and $F$ is non-Archimedean we require the Fourier transform of 
$\rho \cdot \nu$  to become compactly supported
after restricting to a small open  $F^{\times}$-stable neighborhood of $w_0$. In the Archimedean case compact support is replaced by rapid decay. If $w_0=0$ we require that $\rho \cdot \nu =0$. 

According to \cite{Hor} and \cite{Hef}, the above notion of wave front set is invariant with respect to changes of variables, so it makes sense for distributions (or generalized functions) on manifolds.

For more details, see  \cite[Appendix A]{AD}, \cite{Hor}, and \cite{Hef}.

%

\subsection{Warning}   \label{ss:warning}
It is well known that the characteristic variety of a coherent $\mathcal D$-module is always coisotropic
(in particular, if it is isotropic then it is Lagrangian). On the other hand, a similar statement for wave front sets is \emph{false}, see Lemma~\ref{l:not Lagrangian} and Example~\ref{ex:F} from 
Appendix~\ref{s:non-Lagrangian}.

\section{Construction of $U\subset W^*$ and $I\subset T^*W^*$.}   \label{s:4}
In this section we construct the sets $U$ and $I$ whose existence is claimed in Theorems~\ref{t:2} 
and \ref{t:3}.

\medskip

Let $\overline{W}$ denote the space of  lines in $W\oplus F$; in other words, $\overline{W}$ is the projective space containing $W$ as an open subspace. Set $W_{\infty}:=\overline{W}\setminus W$; this is the hyperplane at infinity.

By assumption, the differential form $\omega$ has no poles on $X$, but it may have zeros. 
Set $X^{\circ}:=\{ x\in X\,|\,\omega (x)\ne 0\}$. In what follows we assume that $X^{\circ}$ is dense in $X$
(so $X\setminus X^{\circ}$ is a divisor).

By Hironaka's theorem, after replacing the variety $X$ by a certain modification\footnote{A modification of $X$ is a variety equipped with a proper morphism to $X$ which is a birational isomophism.} of it, $\varphi :X\to W$ admits a compactification to $\overline{\varphi} :\overline{X}\to \overline{W}$ with $\overline{X}$ \emph{smooth} and 
$\overline{X}\setminus X^{\circ}$ being a NC divisor (or even an SNC divisor). As usual, ``NC" (resp.``SNC") stands for ``normal crossings" (resp. ``strict normal crossings"); the word ``strict" means here that each irreducible component of the divisor is smooth. 

So one has a commutative diagram 
\begin{equation}
\xymatrix
{        X   \ar@{}[r]|-*[@]{\hookrightarrow}   \ar@{->}_{\varphi}[d]        & \overline X\ \ar@{->}_{\overline \varphi}[d]
\ar@{}[r]|-*[@]{\supset}   & X_{\infty}:=\overline \varphi^{-1}(W_{\infty})\ \ar@{->}_{}[d] \\
          W \    \ar@{}[r]|-*[@]{\hookrightarrow}   &  \overline W\ar@{}[r]|-*[@]{\supset} & W_{\infty} \\ }
\end{equation}

\bigskip

Set $D:=\overline{X}\setminus X^{\circ}$. By assumption, $D$ is a NC divisor. Note that
$X_{\infty}$ is  a divisor contained in $\overline{X}\setminus X^{\circ}$, so $X_{\infty}$ is also NC.


We choose the modification of the original $X$ and its compactification to be defined over
the small field~$K$. 

The open subset $U\subset W^*$ from Theorem~\ref{t:2} and the isotropic subvariety $I$ from 
Theorem~\ref{t:3} are constructed very explicitly in terms of the above choices, and it will be clear that $U$ and $I$
are defined over $K$. To construct $U$ and $I$, we need some notation.


Let $\nu:\hat D\to D$ be the normalization. For each $r\ge 1$ let $Z'_r$ be the normalization of
\[
\{ x\in D\, |\, \Card \nu^{-1}(x)\ge r \}.
\]
In particular, $Z'_1=\hat D$ and $Z'_r=\emptyset$ if $r>\dim X$. It is easy to see that $Z'_r$ is smooth.
If $D$ is an SNC divisor with irreducible components $D_j$, $j\in J$, then  $Z'_r$ is just the disjoint union of the intersections $D_S:=\bigcap\limits_{j\in S} D_j\,$ corresponding to all subsets $S\subset J$ of order $r$.

Let $Z_r\subset Z'_r$ denote the disjoint union of those connected components of $Z_r$ whose image in 
$\overline X$ is contained in $X_\infty\,$. The map $\overline{\varphi}:\overline{X}\to\overline{W}$ induces a map $\varphi_r:Z_r\to W_{\infty}\,$. 


\medskip

\noindent\emph{Definition of $U$.} Recall that the projective space $W_\infty$ is the space of 1-dimensional subspaces in $W$.  The dual projective space $W_{\infty}^*$ equals $(W^*\setminus\{ 0\})/\G_m\,$; on the other hand, points of $W_{\infty}^*$ can be considered as projective hyperplanes $H\subset W_{\infty}\,$.
Let $\tilde U\subset W_{\infty}^*$ denote the set of those projective hyperplanes $H\subset W_{\infty}$ that are
transversal\footnote{Transversality means that $Z_r\times_{W_{\infty}}H$ is a smooth divisor in $Z_r\,$.}
 to $\varphi_r:Z_r\to W_{\infty}$ for each $r$. Finally, define 
$U\subset W^*\setminus\{ 0\}$ to be the preimage of $\tilde U$.




\medskip

To describe the isotropic subset $I$, let us recall a general construction: to a morphism 
$f:X\to Y$ between smooth algebraic varieties one associates a conic subset $\Crit_f\subset T^*Y$, namely
\begin{equation}   \label{e:defCrit}
\Crit_f\:=\{ (y,\xi )\in T^*Y\,|\,\exists x\in f^{-1} (y) \mbox{ such that } (d_xf)^*(\xi )=0\}.
\end{equation}
Here we understand the quantifier $\exists x$ \emph{in the sense of algebraic geometry} (i.e., the field of definition of $x$ may be \emph{larger} than the ground field).

It is well known that $\Crit_f$ is isotropic\footnote{On the other hand, $\Crit_f$ is not necessarily Lagrangian, see Example \ref{ex:Kashiwara} from Appendix~\ref{s:non-Lagrangian}.} (e.g., this follows from\cite[Prop.~ 2.7.51]{CG} or \cite[Lemma~1]{G}). If $f$ is proper then $\Crit_f$ is closed.

\medskip

\noindent\emph{Definition of $I$.} 
Applying the above construction to $\varphi_r:Z_r\to W_{\infty}$ we get a conic isotropic subvariety
$\Crit_{\varphi_r}\subset T^*W_{\infty}$. Set $I':=\bigcup\limits_r\Crit_{\varphi_r}\subset T^*W_{\infty}$.
Note that $W_{\infty}=(W\setminus\{ 0\})/\G_m$, so $T^*W_{\infty}$ is a 
subquotient of $T^*(W\setminus\{ 0\})=(W\setminus\{ 0\})\times W^*$. So $I'$ defines a subset 
$\tilde I\subset (W\setminus\{ 0\})\times W^*$. Now set 
$I=\tilde I\bigcup (W\times\{ 0\})\bigcup (\{ 0\}\times W^*).$


\begin{theorem}   \label{t:4}
The sets $U$ and $I$ defined above have the properties required in Theorems~\ref{t:2} and \ref{t:3}: namely,
the generalized function $\hat\mu$ has smooth restriction to $U$ and the wave front of $\hat\mu$ is contained in $I$. 
\end{theorem}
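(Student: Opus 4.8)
The plan is to reduce the statement to a local analysis on $\overline X$ near the boundary divisor $D$, using resolution of singularities to make the measure $|\omega|$ and the map $\overline\varphi$ simultaneously monomial. First I would write $\hat\mu$ as an oscillatory integral: for a test measure on $W^*$ paired against $\hat\mu$, one gets (formally) $\int_{X(F)}\psi(\langle\varphi(x),\xi\rangle)\,|\omega(x)|$ integrated against the test data in $\xi$. The key point is that, away from $X_\infty$, the map $\overline\varphi$ lands in $W$ and the integral converges absolutely with all derivatives, so it contributes nothing to the wave front over any point; hence all the wave-front contribution comes from a neighborhood of $X_\infty$ in $\overline X$. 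Since $D=\overline X\setminus X^\circ$ is an NC divisor containing $X_\infty$, I would pick an atlas of coordinate charts on $\overline X$ in which $D$ is a union of coordinate hyperplanes, $\omega$ has the monomial form $\prod x_i^{a_i}\,dx$ (so $|\omega|=\prod|x_i|^{a_i}\,|dx|$), and $\overline\varphi$ expressed in projective coordinates on $\overline W$ also has monomial denominators. A partition of unity subordinate to this atlas reduces everything to a single chart.

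\textbf{Key steps, in order.}
(1) Localize: fix $(w,\eta)\in T^*W^*$ with $w\neq0$; choose the cutoff $\rho$ on $W^*$ around $w$ and study the Fourier transform in the $\eta$-direction, i.e. the behavior of $\int \psi(\lambda\langle\varphi(x),w'\rangle)\,\sigma(w')\,|\omega(x)|$ as a function of $(\lambda,w')$. (2) Use properness to restrict to a compact piece of $X(F)$, then to $\overline X(F)$ near $X_\infty$, then to one monomial chart. (3) In that chart, perform stationary-phase / non-stationary-phase analysis: the phase is $\lambda$ times a function which, after clearing the monomial denominator coming from $\overline\varphi$, is polynomial; integration by parts in the directions where the phase has nonvanishing gradient produces arbitrary decay in $\lambda$. (4) Identify the exceptional directions: decay fails only when the gradient of the phase (restricted to the relevant boundary stratum $Z_r$) vanishes — precisely when $w'$, viewed as a hyperplane $H\subset W_\infty$, fails to be transversal to $\varphi_r\colon Z_r\to W_\infty$, which is exactly the complement of $\tilde U$; this gives smoothness of $\hat\mu$ on $U$. (5) For the wave-front bound: track not just whether decay holds but the covector $\eta$ for which it fails. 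The stationary locus forces a point $z\in Z_r$ with $(d_z\varphi_r)^*(\text{image of }\eta)=0$, i.e. $(w,\eta)$ lies in the image in $T^*(W\setminus\{0\})$ of $\Crit_{\varphi_r}\subset T^*W_\infty$ — that is, in $\tilde I$. Adjoining the zero sections over $w=0$ (handled by the trivial clause $\rho\cdot\nu=0$) and over $\eta=0$ gives containment in $I$. (6) Descent of fields: every variety and map constructed (the modification, $\overline X$, $D$, $Z_r$, $\varphi_r$, $\tilde U$, $\Crit_{\varphi_r}$) is defined over $K$ by construction, and the analytic estimates are uniform in the local field; moreover by Refinement-type reasoning the $\psi$-dependence drops out because homotheties of $W$ rescale $\lambda$. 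Invoke Lemma~\ref{l:conic isotropic} and Corollary~\ref{c:conic isotropic} only for the bookkeeping that $I$ is genuinely isotropic and conic.

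\textbf{Main obstacle.}
The hard part is the uniform non-stationary-phase estimate in the monomial chart, carried out simultaneously over Archimedean and non-Archimedean $F$ and with the singular weight $|\omega|=\prod|x_i|^{a_i}|dx|$, which is not smooth up to the boundary. One must integrate by parts in the ``phase-nonvanishing'' directions while the amplitude $\prod|x_i|^{a_i}$ is only conditionally differentiable near $x_i=0$; the standard trick is to first integrate out (or blow up) the coordinates appearing in the weight — using that $\int_{|t|\le1}|t|^a\psi(\lambda t^k/\text{unit})\,dt$ decays in $\lambda$ with a rate depending only on $a,k$ — and only then apply integration by parts in the transverse coordinates that carry the $Z_r$-direction. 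Organizing this blow-up/IBP alternation so that the exceptional set is exactly $\bigcup_r\Crit_{\varphi_r}$ and no larger is the technical crux; everything else is routine once resolution of singularities is granted.
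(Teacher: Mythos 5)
Your proposal takes a genuinely different route from the paper, and it has a gap at exactly the point you flag as the ``technical crux.''

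The paper never directly estimates an oscillatory integral. It factors $\varphi$ as $X\hookrightarrow\overline X\times W\xrightarrow{\pi}W$, computes the \emph{partial} Fourier transform of $\alfa_*|\omega|$ along $W$ to get the completely explicit distribution $\psi\bigl(\langle f(x),\xi\rangle/p(x)\bigr)\cdot|\omega|$ on $\cU\times W^*$ (formula \eqref{e:3}), pulls back to reduce to the univariate model $\psi\bigl(\eta/x^\alpha\bigr)|x^\beta|$ on $F^{n+1}$, and then bounds the wave front of that model not by integration by parts but by \emph{torus quasi-invariance}: the only inputs to the Key Lemma are that $u$ is quasi-invariant under $(F^\times)^n$ and that $u$ is smooth off the coordinate hyperplanes, so any wave-front covector must kill the tangent space to the torus orbit, i.e.\ lie in a conormal bundle. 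The remaining steps are standard wave-front calculus for pullback along a submersion and pushforward along the proper map $\pi$. This is purely formal, requires no differentiability of the amplitude, and works verbatim over any local field. Your version instead wants to pair $\hat\mu$ with test data and run (non-)stationary phase in monomial charts of $\overline X$, alternating blow-ups with integration by parts to tame the singular weight $\prod|x_i|^{a_i}$.

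The concrete gap: in your monomial chart it is not only the amplitude that is singular at the boundary; the \emph{phase} is $\lambda\,\langle f(x),w'\rangle/p(x)$ with a monomial denominator inside $\psi$, and it blows up along $p(x)=0$. Your step (3) speaks of ``clearing the monomial denominator'' to make the phase polynomial, but one cannot do this inside the character --- the denominator is a genuine singularity of the phase, not a removable artifact. The paper's two moves (introducing the auxiliary variable $\eta=\langle f(x),\xi\rangle$ and invoking the $(F^\times)^n$-quasi-invariance of $\psi(\eta/x^\alpha)|x^\beta|$) are precisely what replaces the blow-up/IBP alternation you describe. You also do not say how to integrate by parts against $\prod|x_i|^{a_i}$, which is only conditionally differentiable near $x_i=0$, nor how to make the estimates uniform across Archimedean and non-Archimedean $F$ (the $p$-adic setting has no $\partial/\partial\lambda$ to integrate by parts in). Finally, your step (5) identifying the exceptional locus with $\bigcup_r\Crit_{\varphi_r}$ is asserted as a heuristic, not derived; the paper gets this for free from the wave-front calculus applied to the algebraic upper bound $\Crit_{\varphi_r}$, via Lemma~\ref{l:conic isotropic} and Corollary~\ref{c:conic isotropic}. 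To salvage your approach you would need to supply the hard-analytic decay estimates that the paper sidesteps; alternatively, graft the torus quasi-invariance argument into your scheme at the point where you currently propose integration by parts.
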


The proof will be sketched in the next section. A complete proof (under the assumption that $\overline{X}\setminus X^{\circ}$ is a divisor with \emph{strict} normal crossings) is given in \cite{AD}.

Note that the claim about $U$ from Theorem~\ref{t:4} follows from the claim about $I$. This is clear from the next remark.


\begin{rem}
The subset $I'\subset T^*W_{\infty}$ from the definition of $I$ and the subset $\tilde U\subset W_{\infty}^*$ from
the definition of $U$ are related as follows.
Identifying $(T^*W_{\infty}-0)/\G_m$ with the incidence relation $\Inc\subset W_{\infty}\times W_{\infty}^*$ we get from $I'$ a subset of $\Inc$. Its image in $W_{\infty}^*$ equals $W_{\infty}^*\setminus \tilde U$.
%
\end{rem}

\section{Sketch of the proof of Theorem~\ref{t:4}}   \label{s:sketch}
\subsection{Strategy}    \label{ss:strategy}

We are studying $\hat\mu$, where $\mu:=\varphi_*|\omega|$. Let $\overline{X}$ be as in \S\ref{s:4}. 
Decompose $\varphi :X\to W$ as
\begin{equation}     \label{e:2}
\xymatrix
{
X\ar[dr]_{\varphi} \ar@{^{(}->}[r]^{\alfa} &\overline{X}\times W\ar[d]^{\pi}\\
&W
}
\end{equation}
Note that since $\varphi$ is proper the map $i$ is a \emph{closed} embedding. Thus all three maps in diagram \eqref{e:2} are proper.

We have $\mu:=\varphi_*|\omega |=\pi_*i_*|\omega |$. So $\hat\mu=\pi_*\widehat{\alfa_*|\omega|}$. Here 
$\alfa_*|\omega|$ lives on
$\overline{X}\times W$,  and $\widehat{\alfa_*|\omega|}$ is  the \emph{partial Fourier transform} of 
$\alfa_*|\omega|$ with respect to  $W$.

The strategy is to first find an upper bound for the wave front of $\widehat{\alfa_*|\omega|}$
and then to get from it an upper bound for the wave front of $\hat\mu$. The latter is straightforward.\footnote{The calculus of wave fronts was developed by H\"ormander precisely to make such computations straightforward.}
In \S\S\ref{ss:calculus}-\ref{ss:toric} below we discuss the problem of finding an upper bound for the wave front of $\widehat{\alfa_*|\omega|}$; this problem is, of course, local with respect to $\overline{X}$.


\subsection{A calculus-style formula for $\widehat{\alfa_*|\omega|}$}  \label{ss:calculus}
Let us get rid of the notation $i_*$ (which is not a part of a standard calculus course).

Let $x_0\in \overline{X}$. 
Our $\varphi :X\to W$ extends to $\overline{\varphi} : \overline{X}\to \overline{W}$, and on some open subset
$\cU\subset \overline{X}$ containing $x_0$ one
can write $\overline{\varphi} :\cU\to \overline{W}$   as 
$$\overline{\varphi}  (x)=(f(x):p(x)), $$
where $f$ is a $W$-valued regular function, $p$ is a scalar regular function, and the two functions have no common zeros. We claim that on $\cU\times W^*$ one has
\begin{equation}   \label{e:3}
\widehat{\alfa_*|\omega|}=\psi (\frac{\langle f(x) ,\xi \rangle}{p(x)})\cdot |\omega |, \quad (x,\xi )\in
\cU\times W^*.
\end{equation}
Here and in similar situations below we allow a slight abuse of notation: we write simply $\omega $ to denote the pullback of $\omega$ from $\overline{X}$ to $\cU\times W^*$, and we tacitly assume restriction from
$\overline{X}\times W^*$ to $\cU\times W^*$ in the l.h.s. of \eqref{e:3}.


Let us explain the precise meaning of formula \eqref{e:3}. To simplify the discussion, let us fix a Haar measure\footnote{If you do not fix a Haar measure on $W^*$ then the l.h.s. of ~\eqref{e:3} is a distribution along 
$\cU$ and a generalized function along $W^*$.} on $W^*$, then the l.h.s. of ~\eqref{e:3} is a distribution on 
$\cU\times W^*$.


The r.h.s. of ~\eqref{e:3} clearly makes sense as a distribution on a smaller set $(\cU\cap X)\times W^*$ (since $p$ has no zeros on $\cU\cap X$ we just have a  smooth function times $|\omega |$). But as explained below, the r.h.s. of ~\eqref{e:3} makes sense as a distribution on the whole set $\cU\times W^*$, and 
formula  \eqref{e:3} is, in fact, an equality of distributions on $\cU\times W^*$.

Here are two equivalent ways to define the r.h.s. of ~\eqref{e:3} as a distribution on $\cU\times W^*$.
First way: for a smooth compactly supported test function $h$ on $\cU\times W^*$, interpret 
\[
\int\limits_{x,\xi} h(x,\xi )\cdot \psi (\frac{\langle f(x) ,\xi \rangle}{p(x)})\cdot |\omega |
\]
as $\int\limits_{x}\int\limits_{\xi}$ (i.e., integrate along $\xi$ first\footnote{After integrating along $\xi$ one gets a smooth compactly supported function on $\cU$. Moreover, this function vanishes on the locus $p(x)=0$. In the Archimedean case all its derivatives vanish there as well.}).  

To explain the second way, let us assume, for simplicity, that $\cU$ is so small that there is a regular top form 
$\omega_0$ on $\cU$ without zeros. The problem is then to define the expression
\begin{equation}   \label{e:to_define}
\psi (\frac{\langle f(x) ,\xi \rangle}{p(x)})\cdot |\frac{\omega}{\omega_0} |
\end{equation}
as a generalized function on $\cU\times W^*$. In fact, one defines \eqref{e:to_define} as a map
\begin{equation}   \label{e:the_map}
\cU\to\{\mbox{generalized functions on }W^* \}.
\end{equation}
Namely, if $p(x)\ne 0$ then \eqref{e:to_define} is understood literally, and if $p(x)= 0$ then the corresponding
generalized function on $W^*$ is defined to be zero. One checks that the above map \eqref{e:the_map} is \emph{continuous}, so it defines a generalized function on $\cU\times W^*$.

\subsection{Using formula  \eqref{e:3}}  \label{ss:reducing}
The formula reduces the problem to finding an upper bound for the wave front of the distribution
\begin{equation}   \label{e:three}
\psi (\frac{\langle f(x) ,\xi \rangle}{p(x)})\cdot |\omega |, \quad (x,\xi )\in
\cU\times W^*.
\end{equation}

Recall that $f$ and $p$ have no common zeros. Note that the wave front of \eqref{e:three} does not change after replacing $f(x)$ by $f(x)+p(x)w_0$, where $w_0\in W$ is fixed. These facts imply that without loss of generality \emph{we can assume that $f$ has no zeros on $\cU$.} 
%

Now consider the expression
\begin{equation}   \label{e:4}
\psi (\frac{\eta}{p(x)})\cdot |\omega |, \quad (x,\eta )\in\cU\times F,
\end{equation}
which is simpler than \eqref{e:three}. Define \eqref{e:4} as a distribution on the whole $\cU\times F$ (rather than on the open subset where $p(x)\ne 0$) using the procedure from \S\ref{ss:calculus}.
%
The distribution \eqref{e:three} is the pullback of \eqref{e:4} with respect to the map 
$\cU\times W^*\to\cU\times F$ defined by $\eta=\langle f(x) ,\xi \rangle$ (the pullback is well-defined because
$f$ has no zeros on $\cU$, which implies that the map $\cU\times W^*\to\cU\times F$ is a submersion).
Thus it remains to find an upper bound for the wave front of \eqref{e:4}.

\subsection{Using toric symmetry}  \label{ss:toric}
The good news is that $f$ does not appear in formula \eqref{e:4}. Because of this, the toric symmetry due to the normal crossings assumption becomes manifest.
Let us explain more details.

\subsubsection{The SNC case}    \label{ss:SNC case} 
Let us first assume that the divisor $\overline{X}\setminus X^{\circ}$ has 
\emph{strict} normal crossings (this case is enough to prove Theorem~\ref{t:3}).
In this case 
we can pretend that $\cU=\A^n$ and that 
$p$ and $\omega$ from formula~\eqref{e:4} are given by \emph{monomials}. Then the problem is to give an upper bound for the wave front of the generalized function\footnote{Again, to define \eqref{e:8} as a generalized function on the whole space $F^n\times F$ (rather than outside of the coordinate hyperplanes) we use the procedure from 
\S\ref{ss:calculus}.}
\begin{equation}   \label{e:8}
u(x,\eta)=\psi (\frac{\eta}{x^{\alpha}})\cdot |x^{\beta} |, \mbox{ where } x=(x_1,\ldots,x_n)\in F^n, \eta\in F.
\end{equation}
As usual, $\alpha$ and $\beta$ are multi-indices and $x^{\alpha}:=x_1^{\alpha_1}x_2^{\alpha_2}
\ldots x_n^{\alpha_n}$. We assume that $\alpha_i\ge 0$ and whenever $\alpha_i= 0$ we have $\beta_i\ge 0$; the latter assumption ensures that \eqref{e:8} is well-defined as a generalized function.

\medskip

\noindent {\bf Key Lemma.} 
The wave front of \eqref{e:8} is contained in the union of the zero section of the cotangent bundle of $F^{n+1}$ and the conormal bundles of the coordinate planes of dimensions 
$0,1,\ldots, n$.

\smallskip

\begin{proof}
Let $\CW$ denote the wave front of $u$. 
The generalized function $u$ is quasi-invariant with respect to the action of the torus $T:=(F^{\times})^n$ on $F^{n+1}$ defined by
\[
\tilde x_i:=\lambda_i\cdot x_i,\quad \tilde\eta:=\lambda^{\alpha}\cdot\eta, \quad\quad 
(\lambda_1,\ldots,\lambda_n)\in(F^{\times})^n.
\]
The quasi-invariance property of $u$ implies the following property of $\CW$: suppose that $(z,\alpha)\in\CW$, where $z\in\A^{n+1}$ and $\alpha\in T^*_z(\A^{n+1})$; then $\alpha$ vanishes on the tangent space to the $T$-orbit containing $z$. Combining this property of $\CW$ with the fact that $u$ is smooth
outside of the union of the coordinate hyperplanes, we get the desired statement. 
\end{proof}

\subsubsection{The general case}

Now let us drop the strictness assumption on the divisor $\overline{X}\setminus X^{\circ}$.
Then instead of \eqref{e:8} one has to consider the following 
generalized function on $$E_1\times\ldots E_n\times F,$$ 
where $E_1,\ldots,E_n$ are finite extensions of $F$:
\[
u(x_1,\ldots ,x_n,\eta)=\psi (\frac{\eta}{N_{E_1/F}(x_1)^{\alpha_1}\cdot\ldots\cdot N_{E_n/F}(x_n)^{\alpha_n}})\cdot 
\prod_i |N_{E_i/F}(x_i)|^{\beta_i}, 
\;\quad x_i\in E_i\, ,\,  \eta\in F.
\]
An analog of the Key Lemma from \S\ref{ss:SNC case} for this generalized function still holds.
The proof from \S\ref{ss:SNC case} remains valid with an obvious change: instead of the torus 
$(F^{\times})^n$ one uses the torus $E_1^{\times}\times\ldots \times E_n^{\times}$.


\appendix

\section{How non-Lagrangian isotropic varieties appear} \label{s:non-Lagrangian}

\subsection{The isotropic subset $\Crit_f$ is not necessarily Lagrangian}
To a morphism $f:X\to Y$ between smooth algebraic varieties one associates an isotropic conic subset 
$\Crit_f\subset T^*Y$, see formula~\eqref{e:defCrit}. The subset $\Crit_f\subset T^*Y$ is \emph{not necessarily Lagrangian,} i.e., its closure may have components of dimension less than $\dim Y$. I learned the following example of this phenomenon from M.~Kashiwara.

\begin{example}    \label{ex:Kashiwara}
Define $f:\A^2\to \A^2$ by
$$f(t,x)=(t,t^nx), \quad n\ge 2.$$
The differential of $f$ never vanishes; it is degenerate if and only if $t=0$. So far we have used that $n\ge 1$.
Now using that $n\ge 2$ we see that
$\Crit_f$ is the union of the zero section and a line in the 
cotangent space of $(0,0)\in Y$. 
\end{example}

\begin{question}
Is there a \emph{proper} morphism $f:X\to Y$ between smooth algebraic varieties
such that $\Crit_f$ is not Lagrangian?
\end{question}

\subsection{A distribution whose wave front is isotropic but not Lagrangian. }
Consider the map $f:\A^2\to \A^2$ from Example~\ref{ex:Kashiwara}, i.e.,   
\begin{equation}   \label{e:f}
f(t,x)=(t,y), \mbox{ where } y=t^nx, \quad n\ge 2.
\end{equation}
Let $F$ be a local field of characteristic 0 and $\varphi :F\to\C$ 
a smooth compactly supported function. 
Define a distribution $u$ on $F^2$
by
\begin{equation}   \label{e:u}
u:=f_*(\varphi (x)\cdot |dt\wedge dx|)
\end{equation}
(the r.h.s makes sense because the restriction of $f:F^2\to F^2$ to the support of the distribution 
$\varphi (x)\cdot |dt\wedge dx|$ is proper). 
As usual, the distribution $u$ can be considered as a generalized function. This function is smooth
outside of the point $y=t=0$; in fact, it is easy to see that 
$$u(t,y)=|t^{-n}|\cdot\varphi (y/t^n) \;\;\mbox{ if } t\ne 0; \quad u(t,y)=0 \;\;\mbox{ if } t=0, y\ne 0.$$

\begin{lem} \label{l:not Lagrangian}
If $\varphi\ne 0$ then the wave front of $u$ is the union of the following two sets:

a) the part of the zero section that corresponds to the support of the function $\varphi (y/t^n)$;

b) a line\footnote{You see this line (at lest, in the case $F=\bR$) if you draw a picture of the function $|t^{-n}|\cdot\varphi (y/t^n)$ (more precisely, a picture of the set $|y|\le c\cdot |t^n|$, where $c$ is a ``fuzzy" number rather than a ``sharp" one).} in the cotangent space of the  point $y=t=0$.
\end{lem}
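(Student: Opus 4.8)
The plan is to compute the wave front of $u$ directly from the explicit formula $u(t,y)=|t^{-n}|\cdot\varphi(y/t^n)$ for $t\ne0$, $u(t,y)=0$ for $t=0,\,y\ne0$, using the characterization of the wave front via localized partial Fourier transforms from \S\ref{ss: wave front def}. Since $u$ is a smooth function on the open set $\{t\ne0\}$ and also on $\{y\ne0\}$ (where it is locally constant equal to $0$, resp. literally smooth), the wave front is concentrated over the closed set $\{t=0\}$, and over a point $(0,y_0)$ with $y_0\ne0$ the function vanishes identically near that point, so nothing is contributed there. Hence $\CW(u)$ lies over the single point $y=t=0$ together with the portion of the zero section forced by general principles; it remains to identify exactly which covectors $(w_0;\,\tau,\upsilon)\in T^*_{(0,0)}F^2$ are in the wave front.

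First I would reduce to a one-variable analysis along the lines. Fix a test bump $\rho$ supported near $(0,0)$ with $\rho(0,0)\ne0$ and examine the Fourier transform of $\rho\cdot u$ in the directions $(\tau,\upsilon)$. The heuristic (recorded in the footnote to part b)) is that the ``fuzzy'' region $|y|\lesssim c|t^n|$ is, near the origin, indistinguishable from a neighborhood of the $t$-axis $\{y=0\}$, so the only asymptotic direction in which the localized Fourier transform fails to decay rapidly is the conormal direction to $\{y=0\}$, i.e. the line spanned by the covector $dy$ at the origin. Concretely, I would estimate $\widehat{\rho u}(\tau,\upsilon)=\int \rho(t,y)\,|t|^{-n}\varphi(y/t^n)\,\psi(-\tau t-\upsilon y)\,dt\,dy$; substituting $y=t^n s$ turns the inner integral into $\int \rho(t,t^n s)\,\varphi(s)\,\psi(-\tau t-\upsilon t^n s)\,dt\,ds$, which for $\tau$ bounded away from $0$ (and $\upsilon$ anything, or $\upsilon$ growing more slowly) decays rapidly in $\tau$ by non-stationary phase / repeated integration by parts in $t$ (Archimedean case) or by the locally-constant-and-compactly-supported structure killing high-frequency components (non-Archimedean case). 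This shows no covector with $\tau\ne0$ in the ``$(0,0)$ fiber'' belongs to $\CW(u)$ — more precisely, the only surviving direction is $\tau=0$, i.e. the line $\{(\tau,\upsilon):\tau=0\}\subset T^*_{(0,0)}F^2$, which is part (b).

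Next I would show this line is genuinely present, i.e. $\widehat{\rho u}(0,\upsilon)$ does \emph{not} vanish asymptotically as $\upsilon\to\infty$ when $\varphi\ne0$. Here one integrates first in $t$: $\int\rho(t,y)|t|^{-n}\varphi(y/t^n)\,dt$ is, for $y$ near $0$, essentially $|y|^{-1}\cdot(\text{const})$ up to a smooth correction (by the substitution above it is $\int \rho(t,t^ns)\varphi(s)\,ds$ evaluated along $t=(y/s)^{1/n}$ — more cleanly, the pushforward of $\varphi(x)|dx|$ under $x\mapsto y=t^nx$ for fixed small $t$), producing a function of $y$ with a genuine singularity of conormal type at $y=0$ whose Fourier transform in $\upsilon$ does not decay rapidly. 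Thus the line in (b) is in $\CW(u)$, and the zero-section part in (a) follows from the standard fact that the wave front of a nonzero generalized function always meets the zero section over its singular support and, by smoothness off $\{t=y=0\}$, the zero-section contribution is exactly over $\supp u$, which is $\supp\varphi(y/t^n)$ (together with $\{t=0\}$, but the latter is contained in the closure of $\supp\varphi(y/t^n)$). I would also note the consistency check with Theorem~\ref{t:4}/Example~\ref{ex:Kashiwara}: $u=f_*(\varphi(x)|dt\wedge dx|)$ fits the pushforward-of-a-density setup, $\Crit_f$ is exactly the zero section union this line, and the computed $\CW(u)$ is contained in $\Crit_f(F)$ as predicted, which is the whole point of the example.

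\textbf{Main obstacle.} The delicate point is the quantitative decay estimate for $\widehat{\rho u}(\tau,\upsilon)$ showing that \emph{no} direction other than $\{\tau=0\}$ survives — in particular handling the interaction between the two frequency variables near the origin where the substitution $y=t^ns$ degenerates (the Jacobian $|t|^n$ vanishes), and doing this uniformly in the Archimedean and non-Archimedean cases. In the non-Archimedean case one must be careful that $u$ is genuinely not locally constant near the origin (it is not), so the argument is really an oscillation estimate, not a triviality; the cleanest route is probably to invoke the wave-front calculus already set up — $u$ is a proper pushforward of the smooth density $\varphi(x)|dt\wedge dx|$, whose wave front is the zero section, and $\CW(f_*\nu)\subseteq \{(f(x);\xi):(x;(d_xf)^*\xi)\in\CW(\nu)\}=\Crit_f(F)$ — so that the upper bound $\CW(u)\subseteq\Crit_f(F)$ is automatic, and the real work is only the \emph{lower} bound, i.e. exhibiting one nonzero element of $\widehat{\rho u}$ along the line in (b), which is the more tractable direction of the two.
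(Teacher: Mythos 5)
Your proposal ultimately converges on the paper's strategy for the upper bound: you correctly observe that the slickest route is to view $u=f_*(\varphi(x)|dt\wedge dx|)$ as a proper pushforward of a smooth density and invoke the wave-front calculus to get $\CW(u)\subseteq\Crit_f(F)$, and then to read off $\Crit_f$ from Example~\ref{ex:Kashiwara}. Where you diverge from the paper is the lower bound. The paper's argument there is shorter than what you propose: $u$ is unbounded near the origin (for any $x_0$ with $\varphi(x_0)\ne 0$ one has $u(t,t^nx_0)=|t|^{-n}\varphi(x_0)\to\infty$), so $u$ is not smooth at $(0,0)$; since the wave front is closed and conic, and the upper bound over the origin is the single line $\{\tau=0\}$ plus the zero section, the wave front must contain that whole line. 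No Fourier estimate is needed.

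Your alternative lower-bound route (directly showing $\widehat{\rho u}(0,\upsilon)$ does not decay) is in principle viable but as written has two gaps. First, a computational one: substituting $t=|y|^{1/n}s$ gives $\int\rho(t,y)|t|^{-n}\varphi(y/t^n)\,dt\sim |y|^{-1+1/n}\cdot\rho(0,0)\int|s|^{-n}\varphi(1/s^n)\,ds$, not $|y|^{-1}$; the exponent $-1+1/n$ is still negative for $n\ge 2$, so the conclusion survives, but the claimed form of the singularity is off. Second, and more seriously, the leading constant $\int|s|^{-n}\varphi(1/s^n)\,ds=\int|w|^{n-2}\varphi(w^n)\,dw$ can vanish for nonzero $\varphi$ (e.g. $\varphi$ odd, $n$ odd), in which case your asymptotic is inconclusive and you would need to extract a subleading term. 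This is exactly the sort of fragility the paper's ``unbounded, hence not smooth, hence wave front leaves the zero section'' argument avoids. Similarly, your non-stationary-phase estimate ruling out directions with $\tau\ne 0$ is a legitimate but unnecessary duplication of the wave-front calculus you already invoke. In short: right strategy, but the paper's lower bound is a one-liner, and your explicit computation both miscalculates the exponent and needs a nondegeneracy hypothesis that the lemma does not assume.
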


\begin{proof}
The calculus of wave fronts\footnote{E.g., see \cite[Prop.~2.3.8]{AD} and  \cite[Def.~2.3.6]{AD}.} tells us that the wave front of $u$ is contained in the set of $F$-points of $\Crit_f\,$. So the description of $\Crit_f$ given in Example~\ref{ex:Kashiwara} yields an upper bound for the wave front. On the other hand, the wave front set is a conic subset of the cotangent bundle which is not contained in the zero section (because $u$ is not smooth at the point $y=t=0$).
\end{proof}

\begin{example}    \label{ex:F}
Here is a variant of the above construction assuming that $F\ne\C$.  Choose 
$a\in F^\times$, $a\not\in (F^\times )^2$. 
Instead of the map \eqref{e:f}, consider
the map $f:F^2\to F^2$ defined by
\begin{equation} \label{e:new f}
f(t,x)=(t,\frac{x^3}{3}-at^2x).
\end{equation}
Consider the distribution \eqref{e:u}  on $F^2$ corresponding to the new map $f:F^2\to F^2$. Then Lemma~\ref{l:not Lagrangian} holds for this distribution (to see this, note that
since $a\not\in (F^\times )^2$ the only critical point of $f:F^2\to F^2$ is $x=t=0$). Note that the algebraic variety $\Crit_f$ corresponding to the map \eqref{e:new f} is Lagrangian, but one of its irreducible components has very few $F$-points.
\end{example}


\end{document}